\theoremstyle{plain}
\newtheorem{thm}{Theorem}[section]
\theoremstyle{definition}
\newtheorem{defn}[thm]{Definition}
\newtheorem{rem}[thm]{Remark}
\numberwithin{equation}{section}
\newcommand{\R}{\mathbb R} 
\newcommand{\leb}{{\mathcal L}}
\newcommand{\mtwo}{{\mathbb M}^{2{\times}2}}
\newcommand{\mthree}{{\mathbb M}^{3{\times}3}}
\newcommand{\dist}{{\rm dist}}
\newcommand{\sym}{{\rm sym}\,}
\newcommand{\wto}{\rightharpoonup}
\newcommand{\eps}{\varepsilon}
\newcommand{\Id}{{\rm Id}}
\newcommand{\E}{{\mathcal E}}
\newcommand{\F}{{\mathcal F}}
\newcommand{\J}{{\mathcal J}}
\newcommand{\IvK}{{\mathcal I}_{\rm vK}}
\newcommand{\JvK}{{\mathcal J}_{\rm vK}}
\newcommand{\Ib}{{\mathcal I}_\beta}
\newcommand{\Ilin}{{\mathcal I}_{\rm lin}}
\newcommand{\Jlin}{{\mathcal J}_{\rm lin}}
\begin{document}
 
\title[Convergence of equilibria of thin elastic plates]{Convergence of equilibria of 
thin elastic plates under physical growth conditions for the energy density}
\author[M.G. Mora]{Maria Giovanna Mora}
\author[L. Scardia]{Lucia Scardia}
\address[M.G. Mora]{Scuola Internazionale Superiore di Studi Avanzati, Via Beirut 2, 34014 Trieste (Italy)}
\email{mora@sissa.it}

\address[L. Scardia]{Max Planck Institute for Mathematics in the Sciences, Inselstra\ss e 22-26, 
04103 Leipzig (Germany)}
\email{scardia@mis.mpg.de}

\date{\today}

\subjclass{74K20, 74B20, 49J45}
\keywords{Nonlinear elasticity, plate theories, von K\'arm\'an equations, equilibrium configurations, 
stationary points}

\begin{abstract} 
The asymptotic behaviour of the equilibrium configurations of a thin elastic plate
is studied, as the thickness $h$ of the plate goes to zero. More precisely, 
it is shown that critical points of the nonlinear elastic functional
$\E^h$, whose energies (per unit thickness) are bounded by $Ch^4$, converge
to critical points of the $\Gamma$-limit of $h^{-4}\E^h$. This is proved under 
the physical assumption that the energy density $W(F)$ blows up as $\det F\to0$.
\end{abstract}

\maketitle

\section{Introduction}

A {\em thin plate} is a three-dimensional body, occupying in a reference configuration a region of the form
$\Omega_h:=S{\times}(-\frac{h}{2},\frac{h}{2})$, where the mid-surface $S$ is a bounded domain of $\R^2$ and 
the small parameter $h>0$ measures the thickness of the plate. 

The elastic behaviour of such bodies is classically described by means of two-dimensional models, 
which are easier to handle both from an analytical and a computational viewpoint 
than their three-dimensional counterparts. There exists a large variety of such 
theories in the literature (see \cite{Cia, Lov} for a survey). However, as
their derivation is usually based on a priori assumptions on the form of relevant deformations,
their rigorous range of validity is typically not clear.
A fundamental question in elasticity is thus to justify rigorously 
lower dimensional models in relation to the three-dimensional theory. 

Recently, a novel variational approach through $\Gamma$-convergence has led to the rigorous derivation 
of a hierarchy of limiting theories. Among other features, it ensures the convergence of three-dimensional minimizers 
to minimizers of suitable lower dimensional limit energies.

In this paper we discuss the convergence of (possibly non-minimizing) {\em stationary points} of the
three-dimensional elastic energy, assuming {\em physical growth conditions} for the energy density.

\

We first review the main results of the variational approach.
Given a thin plate $\Omega_h$, the starting point of the variational analysis is 
the three-dimensional nonlinear elastic energy (scaled by unit thickness) $\E^h(w,\Omega_h)$ associated to
a deformation $w$ of the plate. The limiting behaviour of $\E^h$ as the thickness
of the plate tends to zero, can be described by the $\Gamma$-limit $\Ib$ of the 
functionals
$$
h^{-\beta}\E^h(\cdot,\Omega_h),
$$
as $h\to0$, for a given scaling $\beta\geq0$. As mentioned above, this implies, roughly speaking,
convergence of minimizers $w^h$ of $\E^h(\cdot,\Omega_h)$ (subject to applied forces or boundary conditions)
to minimizers of the two-dimensional energy $\Ib$, provided $\E^h(w^h,\Omega_h)\leq Ch^\beta$.
For the definition and main properties of $\Gamma$-convergence
we refer to the monographs \cite{Bra,DM}.

In this setting, $\Gamma$-convergence was first proved by Le Dret and Raoult in \cite{LDR} for the scaling $\beta=0$. 
This led to a rigorous justification of the {\em nonlinear membrane theory}. 
In the seminal papers \cite{FJM02,FJM06} Friesecke, James, and M\"uller established $\Gamma$-convergence for all $\beta\geq2$. The scaling $\beta=2$ corresponds in the limit to the {\em Kirchhoff plate theory}, while $\beta=4$
to the {\em von K\'arm\'an plate theory}. For $\beta>4$ the usual linear theory is derived, while the
intermediate scalings $2<\beta<4$ relate to a linear theory with constraints.
The case of $0<\beta< 5/3$ was recently solved by Conti and Maggi~\cite{CM}. 
The regime $5/3\leq\beta<2$ remains open and is conjectured to be relevant for crumpling of elastic sheets.
Analogous results have been proved for thin rods in~\cite{ABP, MM03, MM04}.

\

The intent of this paper is to investigate the convergence of stationary points of the three-dimensional
nonlinear elastic energy (subject to applied forces and boundary conditions) 
to stationary points of the $\Gamma$-limit functional. 
The first result concerning convergence of equilibria for thin bodies has been shown in \cite{MMS06}, in the case
of a thin strip and for the scaling $\beta=2$. 
This work has been extended in \cite{MM06} to the case of a thin rod in the regime $\beta=2$,
and then in \cite{MP06} to a thin plate in the von K\'arm\'an regime $\beta=4$ 
(see also \cite{LM08} for an extension to thin shells).
A crucial assumption in all these papers is that the elastic energy density $W$ is differentiable everywhere and its derivative satisfies a linear growth condition. 
This bound on $DW$ is 
unsatisfactory, as it prevents the blow-up of $W(F)$ as the determinant of the deformation gradient $F$ tends to zero (which corresponds to a strong compression of the body). We point out that, instead, the results in \cite{FJM02,FJM06}, as well as the ones in \cite{MM03, MM04}, do not require any bound from above on~$W$.
On the other hand, without assuming a linear growth condition on $DW$, it
is not even clear to which extent minimizers satisfy the Euler-Lagrange equations in the conventional form
(see \eqref{EL-class} below). 

A growth condition on $W$, which is compatible with the blow-up condition as 
$\det F\to0$ is:
\begin{equation}\label{intro1}
\big|DW(F)F^T \big| \leq k(W(F) + 1)  
\end{equation}
for every $F$ with $\det F>0$. In \cite{Ball83, Ball} Ball has shown that, under assumption \eqref{intro1}, it
is possible to derive an alternative first-order necessary condition for minimizers 
(Theorem~\ref{ball-thm}).
When minimizers are invertible, this condition corresponds to the equilibrium equation
for the Cauchy stress tensor (Remark~\ref{rem2}).

\

In this paper we focus on the scaling $\beta=4$ and we extend the analysis of \cite{MP06} 
to the case of an elastic energy density $W$ satisfying the physical growth condition \eqref{intro1}. 
More precisely, we call a deformation a stationary point of the three-dimensional energy 
if it satisfies the first-order necessary condition introduced by Ball in \cite{Ball83, Ball} (Definition~\ref{ourEq}).
In Theorem~\ref{mainthm} we prove that 
any sequence of stationary points $w^h$ of the three-dimensional energy,
satisfying $\E^h(w^h,\Omega_h)\leq Ch^4$, converges 
to a stationary point of the von K\'arm\'an functional (i.e., to a solution of the classical Euler-Lagrange equations of the limiting functional). 
This is the first result of convergence of equilibria
for thin plates compatible with the physical requirement that $W(F)\to+\infty$ as $\det F\to0$.

A key ingredient in the proof of our main result is the quantitative rigidity estimate proved by Friesecke, James, and M\"uller in \cite[Theorem~3.1]{FJM02}. It is first used 
to deduce compactness of sequences of stationary points from the bound on the elastic part of the energy,
and then to define suitable strain-like and stress-like variables $G^h$ and $E^h$ (see \eqref{dec} and \eqref{Eh}).
While in \cite{MP06} the $L^2$ bound on the strains $G^h$ (which follows directly from the rigidity estimate) implies immediately an analogous bound for the stresses $E^h$, the present case exhibits an additional difficulty. Indeed, in our setting the stresses $E^h$ turn out to be naturally defined as
\begin{equation}\label{intro2}
E^h=\frac{1}{h^2}DW(\Id+h^2G^h)(\Id+h^2G^h)^T
\end{equation}
($E^h$ can be interpreted as a sort of Cauchy stress tensor, read in the undeformed
configuration, see also Remark~\ref{rem2}). Hence, using the growth condition \eqref{intro1} and the bound on the elastic energy we 
can only deduce weak compactness of $E^h$ in $L^1$ and this convergence is not enough to pass to the limit
in the three-dimensional Euler-Lagrange equations (see Steps~2~--~3 of the proof
and the discussion therein).

This difficulty is overcome by identifying a sequence of measurable sets $B_h$,
which converge in measure to the whole set $\Omega:=S{\times}(-\frac12,\frac12)$
and satisfy the following properties.
On $B_h$ the remainder in the first order Taylor expansion of $DW$ around the identity is uniformly controlled with respect to $h$, so that one can
deduce an $L^2$ bound for $E^h$ from \eqref{intro2} and from the $L^2$ bound on $G^h$.
On the complement of $B_h$ one can use the growth condition \eqref{intro1} to show that the contribution of $E^h$ on this set is negligible at the limit in the $L^1$ norm.
This mixed type of convergence of the stresses is then shown to be sufficient to
pass to the limit in the three-dimensional Euler-Lagrange equations.

\

Our main result also applies to the regime $\beta>4$. More precisely, in Theorem~\ref{mainthm}
we also show that any sequence of stationary points $w^h$ of the three-dimensional energy,
satisfying $\E^h(w^h,\Omega_h)\leq Ch^\beta$ with $\beta>4$, converges 
to a stationary point of the functional of the linear plate theory. 

Convergence results for thin plates in the Kirchhoff regime $\beta=2$
and in the intermediate scalings $2<\beta<4$ are still open, even under
the simplifying assumption of linear growth of $DW$. 
The additional difficulties in the analysis of these regimes are due to the
weaker compactness properties arising from the rigidity theorem and
to the presence, in the limiting model, of a nonlinear or geometrically linear 
isometry constraint. 

\

The plan of the paper is the following. In Section~2 we describe the setting 
of the problem and we discuss the first order necessary condition by Ball. 
Section~3 contains the statement of the main result, which is proved in Section~4.

\section{Setting of the problem}

We consider a thin plate, whose reference configuration is given by the set 
$\Omega_h= S{\times}(-\frac{h}{2}, \frac{h}{2})$, where $S\subset\R^2$ is a bounded domain 
with Lipschitz boundary and $h>0$. 

Deformations of the plate are described by maps $w\colon\Omega_h\to\R^3$, which are assumed
to belong to the space $H^1(\Omega_h;\R^3)$. Moreover, we require the deformations $w$ 
to satisfy the boundary condition 
\begin{equation}\label{clamp}
w(z) = z \quad \mbox{for every } z\in\Gamma{\times}(-\tfrac{h}{2}, \tfrac{h}{2}),
\end{equation}
where $\Gamma$ is a (non-empty) relatively open subset of $\partial S$.

To any deformation $w\in H^1(\Omega_h;\R^3)$ we associate the total energy (per unit thickness)  
defined as 
\begin{equation}\label{elenergy}
\F^h(w)= \frac{1}{h} \int_{\Omega_h} W(\nabla w)\,dz -\frac{1}{h}\int_{\Omega_h} f^h{\,\cdot\,}w\, dz,
\end{equation} 
where $f^h\in L^2(\Omega_h;\R^3)$ is the density of a body force applied to $\Omega_h$.

On the stored-energy density $W\colon\mthree\to [0,+\infty]$ we require the following
asssumptions:
\begin{align}
& W\text{ is of class } C^1 \text{ on } \mthree_+;
\label{eq22}
\\ 
& W(F) = +\infty \quad \text{if }  \det F \leq 0, \qquad W(F)\to +\infty\quad \text{as } \det F \to 0^+;
\label{eq24}
\\
& W(RF) = W(F) \quad \mbox{for every } R\in SO(3),\ F\in\mthree \quad \mbox{(frame indifference)}.
\label{eq25}
\end{align}
Here $\mthree_+$ denotes the set of matrices $F\in\mthree$ with $\det F>0$, while 
$SO(3)$ denotes the set of proper rotations $\{R\in\mthree:\ R^TR=\Id,\ \det R=1\}$.
Condition \eqref{eq24} is related to the physical requirements of non-interpenetration of matter and preservation of orientation. It ensures local invertibility of $C^1$ deformations with finite energy.

We also require $W$ to have a single well at $SO(3)$, namely
\begin{align}
&\hspace{-4.7cm} W=0 \quad \mbox{on } SO(3);
\label{eq26}
\\
&\hspace{-4.7cm} W(F) \geq C\dist^2(F,SO(3));
\label{eq27}
\\
&\hspace{-4.7cm} W \,\, \mbox{is of class } C^2 \, \mbox{in a } \delta\mbox{-neighbourhood of } SO(3).
\label{eq28}
\end{align}

Finally, we assume the following growth condition:
\begin{equation}\label{Ball}
\big|DW(F)F^T \big| \leq k(W(F) + 1)  \quad \mbox{for every } F \in \mthree_+. 
\end{equation}
This is a mild growth condition on $W$, introduced by Ball in \cite{Ball83, Ball},
which is compatible with the physical requirement \eqref{eq24}, but is nevertheless 
sufficient to derive a first-order condition for minimizers of $\F^h$. 
In fact, by performing external variations $w+\eps\phi$ of a minimizer $w$,
one is formally led to the Euler-Lagrange equations in the conventional form
\begin{equation}\label{EL-class}
\int_{\Omega_h} DW(\nabla w){\,\cdot\,}\nabla\phi\,dz = \int_{\Omega_h} f^h{\,\cdot\,}\phi\, dz
\quad\forall \phi \text{ smooth with }\phi|_{\Gamma{\times}(-\frac{h}{2},\frac{h}{2})}=0.
\end{equation}
To justify rigorously this derivation, one has to require that either $DW$ is Lipschitz continuous
or the minimizer $w$ belongs to $W^{1,\infty}$ and satisfies a stronger orientation preserving condition,
namely $\det\nabla w\geq c>0$ a.e.\ in $\Omega_h$. However, none of these assumptions is satisfactory: 
the Lipschitz continuity of $DW$ is incompatible with \eqref{eq24}, while 
there may exist minimizers that do not belong to $W^{1,\infty}$ or do not satisfy the stronger orientation
preserving condition (see the discussion in \cite[Section~2.4]{Ball}).

If instead condition \eqref{Ball} is assumed, then it is possible to derive an alternative
equilibrium equation for minimizers.
More precisely, by considering variations of the form $w+\eps\,\phi{\,\circ\,}w$ one
can deduce the following condition.

\begin{thm}[{\cite[Theorem 2.4]{Ball}}] \label{ball-thm}
Assume that $W$ satisfies \eqref{eq22}, \eqref{eq24}, and \eqref{Ball}.
Let $U\subset\R^3$ be a bounded domain with a Lipschitz boundary
$\partial U=\partial U_1\cup\partial U_2\cup N$, where $\partial U_1$, $\partial U_2$ are
disjoint relatively open subsets of $\partial U$ and $N$ has 
zero two-dimensional measure.
Let $\bar w\in H^{1/2}(\partial U;\R^3)$ and $f\in L^2(U;\R^3)$.
Let $w\in H^1(U;\R^3)$ be a local minimizer of the functional
$$
\F(w):=\int_U W(\nabla w)\, dz -\int_U f{\,\cdot\,}w\, dz
$$
subject to the boundary condition $w=\bar w$ on $\partial U_1$, that is, there exists
$\eps>0$ such that $\F(w)\leq \F(v)$ for every $v\in H^1(U;\R^3)$ satisfying $\|v-w\|_{H^1}\leq\eps$
and $v=\bar w$ on $\partial U_1$.
Then
\begin{equation}\label{Euler-eq}
\int_U  DW(\nabla w)(\nabla w)^T {\,:\,} \nabla\phi(w)\, dz = 
\int_U f{\,\cdot\,} \phi(w)\,dz
\end{equation} 
for all $\phi\in C_b^1(\R^3;\R^3)$ such that $\phi\circ w= 0$ 
on $\partial U_1$ in the sense of trace.
\end{thm}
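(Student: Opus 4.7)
The plan, following Ball \cite{Ball83, Ball}, is to use ``inner'' variations $w_\eps := w + \eps\,\phi\circ w$ rather than classical outer variations. First I would verify that $w_\eps$ is admissible for local minimality when $|\eps|$ is small: since $\phi\in C_b^1$, one has $\|w_\eps - w\|_{H^1} = O(\eps)$, and the boundary condition $w_\eps = \ol w$ on $\partial U_1$ holds because $\phi\circ w = 0$ there in the trace sense. The crucial feature is that the chain rule produces a \emph{multiplicative} perturbation of the gradient,
$$
\nabla w_\eps(z) = (\Id + \eps\,\nabla\phi(w(z)))\,\nabla w(z).
$$
Since $\det\nabla w > 0$ a.e.\ (by \eqref{eq24} together with finiteness of the energy) and $\nabla\phi$ is bounded, $\det\nabla w_\eps > 0$ a.e.\ for small $\eps$, so $W(\nabla w_\eps)$ is finite a.e.\ and $C^1$ in~$\eps$.

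The heart of the argument, and the step I expect to be the main obstacle, is producing an integrable majorant of the difference quotient so that one can differentiate under the integral. Using the algebraic identity $DW(F){\,:\,}GF = G{\,:\,}DW(F)F^T$, differentiation in $\eps$ yields
$$
\frac{d}{d\eps}W((\Id+\eps A)F) = \bigl[A(\Id+\eps A)^{-1}\bigr]{\,:\,}DW(F_\eps)F_\eps^T,
$$
where $A := \nabla\phi(w)$ and $F_\eps := (\Id+\eps A)\nabla w$. Applying the Ball growth condition \eqref{Ball} together with the uniform bound $|(\Id+\eps A)^{-1}|\leq 2$ for $\eps$ small, I would set up a Gr\"onwall-type differential inequality in $\eps$ and integrate to obtain
$$
W(\nabla w_\eps) + 1 \leq (W(\nabla w) + 1)\,e^{C\|\nabla\phi\|_\infty|\eps|},
$$
which provides the required $L^1$-domination. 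The point is that without the multiplicative structure arising from composition with $\phi\circ w$, the growth condition \eqref{Ball} would not yield an $\eps$-uniform integrable bound, and this is precisely the difficulty that the inner variation is designed to circumvent.

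With dominated convergence in hand, local minimality $\F(w) \leq \F(w_\eps)$ for both signs of $\eps$ forces
$$
\frac{d}{d\eps}\F(w_\eps)\Big|_{\eps=0} = \int_U DW(\nabla w)(\nabla w)^T {\,:\,} \nabla\phi(w)\,dz - \int_U f{\,\cdot\,}\phi(w)\,dz = 0,
$$
which is exactly \eqref{Euler-eq}.
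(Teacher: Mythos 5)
Your proposal is correct and follows essentially the same route as the paper, which does not reprove the result but cites Ball \cite{Ball83,Ball} and indicates precisely this mechanism: variations of the composite form $w+\eps\,\phi\circ w$, whose multiplicative perturbation of the gradient lets the growth condition \eqref{Ball} yield, via a Gr\"onwall-type bound, the integrable majorant needed to differentiate under the integral. Your sketch matches Ball's argument, including the identification of the domination step as the key difficulty.
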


In the theorem above and in what follows, given a subset $U$ of $\R^n$ 
we denote by $C^k_b(U)$ the space of functions of class $C^k$ that
 are bounded in $U$, with bounded derivatives up to the $k$-th order.  
We also stress that in \eqref{Euler-eq} the term $\nabla\phi(w)$
denotes the gradient of $\phi$ computed at the point $w(z)$.

\begin{rem}\label{rem2}
Under the assumptions of Theorem~\ref{ball-thm},
if in addition $w$ is a smooth homeomorphism of $U$ onto $U':=w(U)$, then
equation \eqref{Euler-eq} reduces by means of a change of variables to 
$$
\int_{w(U)} T(w^{-1}(x)){\,:\,}\nabla\phi(x)\, dx=
\int_{w(U)} \tilde f(w^{-1}(x)){\,\cdot\,} \phi(x)\, dx
$$
for all $\phi\in C^1(\R^3;\R^3)$ such that $\phi|_{w(\partial U_1)}= 0$. In the formula above $T$ is the Cauchy stress tensor:
$$
T(z)=(\det\nabla w(z))^{-1}DW(\nabla w(z))(\nabla w(z))^T, \qquad z\in U
$$
and $\tilde f=(\det\nabla w)^{-1}f$
(see \cite[Theorem~2.6]{Ball}). In other words, Theorem~\ref{ball-thm} asserts
that the equilibrium equations are satisfied in the deformed configuration.
\end{rem}

In our setting it is natural to assume \eqref{Euler-eq} as definition of
stationary points of $\F^h$. Our aim is to analyse their limit behaviour, 
as the thickness $h$ goes to~$0$. 
To do so, it is convenient to perform a change of
variables and to reduce to a fixed domain independent of $h$. 
Thus, we consider the scaling $(z',z_3) = (x', hx_3)$, 
$\nabla_h = \left(\nabla',\frac{1}{h}\partial_{3}\right)$, 
$y(x) = w(z)$, and $g^h(x) = f^h(z)$, and we introduce the functional 
\begin{equation}\label{riscalenergy}
\J^h(y) = \F^h(w) = \int_{\Omega}W(\nabla_h y) \,dx -\int_\Omega g^h{\,\cdot\,}y\, dx,
\end{equation}
where $\Omega = S{\times}(-\frac{1}{2}, \frac{1}{2})$ and the scaled deformation 
$y\in H^1(\Omega;\R^3)$ satisfies the boundary condition
\begin{equation}\label{yclamp}
y(x) = (x',hx_3) \quad \mbox{for every } x=(x',x_3)\in\Gamma{\times}(-\tfrac12, \tfrac12).
\end{equation}

According to Theorem~\ref{ball-thm}, we give the following definition.

\begin{defn}\label{ourEq}
We say that a deformation $y\in H^1(\Omega;\R^3)$ is a stationary point of $\J^h$, 
subject to clamped boundary conditions on $\Gamma{\times}(-\frac12,\frac12)$, 
if $y(x) = (x', hx_3)$ for every $x\in \Gamma{\times}(-\frac12,\frac12)$ 
and the following equation is satisfied:
$$
\int_{\Omega} DW(\nabla_h y)(\nabla_h y)^T {\,:\,} \nabla\phi(y)\, dx = 
\int_{\Omega} g^h {\,\cdot\,} \phi(y)\,dx
$$
for all $\phi \in C_b^1(\R^3;\R^3)$ satisfying $\phi(x',hx_3) = 0$ for every  
$x \in \Gamma{\times}(-\frac12,\frac12)$.
\end{defn}

\begin{rem}
In \cite{Ball} Ball has shown that, if $W$ satisfies the growth condition
$$
\big|F^TDW(F)\big| \leq k(W(F) + 1)  \quad \mbox{for every } F \in \mthree_+ 
$$
(which implies, but is not equivalent to \eqref{Ball}, see \cite[Proposition~2.3]{Ball}),
then local minimizers of $\F$ satisfy the equation
$$
\int_U  \big( W(\nabla w)\,\Id - (\nabla w)^T\!DW(\nabla w)\big){\,:\,} \nabla\phi\, dz = 
\int_U (\nabla w)^T\!f{\,\cdot\,} \phi\,dz
$$
for all $\phi\in C^1_0(U;\R^3)$. This equation is obtained by performing internal variations of the form
$w\circ\psi_\eps$, with $\psi_\eps^{-1}(x)=x+\eps\phi(x)$, and can be viewed
as a multi-dimensional version of the classical
Du Bois-Raymond equation of the one-dimensional calculus of variations.
To the purpose of our analysis the use of this equilibrium equation in place of \eqref{Euler-eq} seems
to be less convenient. Indeed, the requirement of zero boundary values for the test functions 
suggests that the equation does not provide precise information about the boundary behaviour of the limiting quantities. 
Moreover, it imposes a severe restriction on the choice of admissible test functions. 
\end{rem}

\begin{rem}
If $W$ satisfies \eqref{Ball}, then $W$ has polynomial growth, that is,
there exists $s>0$ such that
$$
W(F)\leq C(|F|^s+|F^{-1}|^s) \quad \text{for all } F\in\mthree_+
$$ 
(see \cite[Proposition~2.7]{Ball}). In particular, examples of functions satisfying \eqref{eq22}--\eqref{Ball} are:
$$
W(F) = |(F^T F)^{1/2} - \Id |^2 + |\log \det F|^p \qquad \mbox{ for } F\in \mathbb{M}_+^{3\times 3},
$$
or
$$
W(F) = |(F^T F)^{1/2} - \Id |^2 + \Big|\frac{1}{\det F}-1\Big|^p \qquad
\mbox{for } F\in \mathbb{M}_+^{3\times 3},
$$
where $p>1$ and $W$ is intended to be $+\infty$ if $\det F\leq0$.
\end{rem}

\section{Statement of the main result}

In this paper we focus on the asymptotic study of stationary points $y^h$ of $\J^h$ 
(according to Definition~\ref{ourEq}) with elastic energy (per unit thickness) of order $h^\beta$ with $\beta\geq4$, that is,
\begin{equation}\label{el-bd}
\int_\Omega W(\nabla_h y^h)\, dx\leq Ch^\beta, \qquad \beta\geq4.
\end{equation}
For simplicity we assume that the body forces $g^h$ are independent of the variable $x_3$ and 
normal to the mid-surface of the plate; more precisely, we assume $g^h(x)=h^{(\beta+2)/2}g(x')e_3$, 
where $g\in L^2(S)$ is given. The scaling $h^{(\beta+2)/2}$ of the normal force ensures 
consistency with the elastic energy scaling \eqref{el-bd}.

In \cite{FJM06} Friesecke, James, and M\"uller have identified the limit
of the functionals $h^{-\beta}\J^h$, in the sense of $\Gamma$-convergence, 
under the assumptions \eqref{eq25}--\eqref{eq28}.
For $\beta=4$ the $\Gamma$-limit $\JvK$ can be expressed in terms of the 
averaged in-plane and out-of-plane displacements $u$ and $v$ (see \eqref{def-uv}) 
and is given by
\begin{equation}\label{J}
\JvK(u,v) = \IvK(u,v) -\int_S gv\, dx',
\end{equation}
where, for $u\in H^1(S;\mathbb{R}^2)$ and $v\in H^2(S)$, the von K\'arm\'an functional 
$\IvK$ is defined as 
\begin{equation}\label{vK}
\IvK(u,v) = \frac12 \int_{S} Q_2 \Big(\sym\nabla'u +\frac12\nabla'v{\,\otimes\,} \nabla'v\Big)\, dx' 
+ \frac{1}{24} \int_S Q_2 ((\nabla')^2 v)\, dx'.
\end{equation}
Here $Q_2$ is a quadratic form that can be computed from the linearization 
of $W$ at the identity. More precisely, we consider the quadratic form 
$Q_3(F) = D^2W(\Id)F{\,:\,}F$ on $\mthree$ and define the quadratic form $Q_2$ on 
$\mtwo$ through the following minimization procedure:
\begin{equation}\label{defQ2}
Q_2(G) = \leb_2 G{\,:\,}G := \min_{F''=G} Q_3(F),
\end{equation} 
where $F''$ denotes the $2{\times}2$ submatrix given by $F''_{ij} = F_{ij}$, 
$1\leq i,j \leq 2$.

For $\beta>4$ the $\Gamma$-limit $\Jlin$ depends only on the averaged 
out-of-plane displacement $v$ and is given by
\begin{equation}\label{linJ}
\Jlin(v) = \Ilin(v) -\int_S gv\, dx',
\end{equation}
where $\Ilin$ is the functional of linear plate theory, defined as
\begin{equation}\label{linvK}
\Ilin(v) = \frac{1}{24} \int_S Q_2 ((\nabla')^2 v)\, dx'.
\end{equation}
for every $v\in H^2(S)$. 

The $\Gamma$-convergence result guarantees, in particular, that given a minimizing sequence $y^h$
satisfying
$$
\limsup_{h\to0} \frac{1}{h^\beta}\big(\J^h(y^h)-\inf \J^h \big)=0,
$$
the averaged in-plane and out-of-plane displacements associated with $y^h$ converge to a minimizer
$(u,v)$ of $\JvK$ if $\beta=4$. If $\beta>4$, they converge to a pair of the form
$(0,v)$, where $v$ is a minimizer of $\Jlin$.

\

To set the stage for our result on the convergence of equilibria, we derive the
Euler-Lagrange equations for a minimizer $(u,v)$ of $\JvK$.
First of all, from the clamped boundary conditions \eqref{yclamp} it follows that the limiting displacement $(u,v)$ satisfies
\begin{equation}\label{bcuv}
u(x') = 0 \quad \text{and} \quad v(x') = 0, \ \nabla'v(x') = 0 \quad \mbox{for every } x' \in \Gamma.
\end{equation}
By computing the respective variations of $\JvK$ in $u$ and $v$ we obtain the following Euler-Lagrange equations in weak form:
\begin{equation}\label{EulLag-u}
\begin{split}
\int_S \Big(\leb_2\big(\sym \nabla'u +\frac12 
\nabla'v &\otimes \nabla'v\big){\,:\,}(\nabla'v \otimes \nabla'\varphi)\\
&{} + \frac{1}{12}\leb_2 ((\nabla')^2 v){\,:\,}(\nabla')^2\varphi 
- g\varphi\Big)\, dx' = 0 
\end{split}
\end{equation}
for every $\varphi \in H^2(S)$ with $\varphi|_\Gamma=0$, $\nabla'\varphi|_\Gamma = 0$, and
\begin{equation}\label{EulLag-v}
\int_S \leb_2 \big(\sym\nabla'u + \frac12\nabla'v \otimes \nabla'v\big) {\,:\,} \nabla'\psi \, dx' = 0 
\end{equation}
for every $\psi\in H^1(S;\R^2)$ with $\psi|_\Gamma=0$.

In the case of the linear functional $\Jlin$ the limit displacement $v$ satisfies the boundary conditions
\begin{equation}\label{bcvlin}
v(x') = 0, \quad \nabla'v(x') = 0 \quad \mbox{for every } x' \in \Gamma
\end{equation}
and the Euler-Lagrange equations are given by
\begin{equation}\label{EulLag-lin}
\int_S \Big(\frac{1}{12}\leb_2 ((\nabla')^2 v){\,:\,}(\nabla')^2\varphi 
- g\varphi\Big)\, dx' = 0 
\end{equation}
for every $\varphi \in H^2(S)$ with $\varphi|_\Gamma=0$, $\nabla'\varphi|_\Gamma = 0$,

From now on we will adopt the notation $y=(y', y_3)$.

The main result of the paper is the following.

\begin{thm}\label{mainthm}
Assume that the energy density $W$ satisfies \eqref{eq22}--\,\eqref{Ball}. Let $\beta\geq4$.
Let $(y^h)$ be a sequence of stationary points of $\J^h$ according to Definition~\ref{ourEq}, 
with $g^h(x) = h^{(\beta+2)/2}g(x')e_3$.
Assume further that
\begin{equation}\label{growth}
\int_{\Omega}W(\nabla_h y^h) \,dx \leq C h^\beta.
\end{equation}
Set
\begin{equation}\label{def-uv}
\begin{array}{rcl}
u^h(x') & := & \displaystyle  
\frac{1}{h^{\beta/2}}\int_{-\frac12}^{\frac12} 
\big((y^h)'(x',x_3) - x'\big)\,dx_3, 
\smallskip
\\
v^h(x') & := & \displaystyle 
\frac{1}{h^{(\beta-2)/2}} \int_{-\frac12}^{\frac12} y_3^h(x',x_3)\, dx_3
\end{array}
\end{equation}
for every $x'\in S$.
Then the following assertions hold.
\smallskip

\begin{itemize}
 	\item[(i)] (von K\'arm\'an regime) Assume $\beta=4$. Then, 
there exist $u\in H^1(S;\mathbb{R}^2)$ and $v\in H^2(S)$ such that, up to subsequences,
\begin{equation}\label{convU}
u^h\wto u \quad \mbox{weakly in } H^1(S;\mathbb{R}^2)
\end{equation}
and 
\begin{equation}\label{convV}
v^h\to v \quad \mbox{strongly in } H^1(S),
\end{equation}
as $h\to0$, and the limit displacement $(u,v)$ solves \eqref{EulLag-u}--\,\eqref{EulLag-v}, 
and satisfies the boundary conditions \eqref{bcuv}.
\medskip
	\item[(ii)] (linear regime) Assume $\beta>4$. Then, \eqref{convU} and \eqref{convV} hold with
$u=0$, and the limit displacement $v$ solves \eqref{EulLag-lin} and satisfies the boundary conditions \eqref{bcvlin}. 
\end{itemize}
\end{thm}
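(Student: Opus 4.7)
Applying the Friesecke--James--M\"uller rigidity estimate together with \eqref{eq27} and the energy bound \eqref{growth}, I would construct smooth approximating rotation fields $R^h:\Omega\to SO(3)$ and the decomposition
$$ \nabla_h y^h = R^h\bigl(\Id + h^{\beta/2}G^h\bigr),\qquad \|G^h\|_{L^2(\Omega;\mthree)}\leq C, $$
with $G^h\wto G$ weakly in $L^2$ along a subsequence. The standard compactness analysis of \cite{FJM06,MP06} then yields \eqref{convU}--\,\eqref{convV}, the boundary conditions \eqref{bcuv} (respectively $u=0$ and \eqref{bcvlin} when $\beta>4$), and, for $\beta=4$, the structural identification
$$ \sym G''(x) = \sym\nabla'u(x') - x_3(\nabla')^2 v(x') + \tfrac12\nabla'v(x')\otimes\nabla'v(x'), $$
together with the plate-relaxation property of the third row and column of the limit strain, which gives $(\leb_3 G)''=\leb_2\sym G''$ and $(\leb_3 G)_{i3}=(\leb_3 G)_{3j}=0$, where $\leb_3:=D^2W(\Id)$.

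\textbf{Stresses and the good/bad-set dichotomy.} Differentiating the frame-indifference relation $W(RF)=W(F)$ gives $DW(RF)=R\,DW(F)$, so the equation of Definition~\ref{ourEq} is equivalent to
$$ h^{\beta/2}\!\int_\Omega R^h E^h (R^h)^T : \nabla\phi(y^h)\, dx = \int_\Omega g^h\cdot\phi(y^h)\, dx,\qquad E^h := h^{-\beta/2}DW(\Id+h^{\beta/2}G^h)(\Id+h^{\beta/2}G^h)^T. $$
Ball's condition \eqref{Ball} only yields $\|E^h\|_{L^1}\leq C$, which is insufficient. I would therefore split $\Omega = B_h\cup B_h^c$ with
$$ B_h:=\bigl\{x\in\Omega : h^{\beta/2}|G^h(x)|\leq h^\alpha\bigr\}, \qquad\alpha\in(0,\beta/4), $$
so that Chebyshev gives $|B_h^c|\leq Ch^{\beta-2\alpha}=o(h^{\beta/2})$. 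On $B_h$, since the argument of $DW$ lies in the $C^2$-neighbourhood of $SO(3)$ from \eqref{eq28} and $DW(\Id)=0$, Taylor expansion yields
$$ E^h\chi_{B_h}= D^2W(\Id)G^h\,\chi_{B_h}+o(1)\text{ in }L^2(\Omega;\mthree),\qquad E^h\chi_{B_h}\wto \leb_3 G\text{ in }L^2. $$
On the complement, \eqref{Ball} yields
$$ \int_{B_h^c}|E^h|\, dx \leq k h^{-\beta/2}\!\int_\Omega W(\nabla_h y^h)\, dx + k h^{-\beta/2}|B_h^c| \longrightarrow 0. $$

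\textbf{Passing to the limit in the Euler--Lagrange equation.} For \eqref{EulLag-v} I would test with $\phi(y)=h^{\beta/2}(\psi(y'),0)$ for $\psi\in C_c^1(\R^2\setminus\overline\Gamma;\R^2)$ extended to a $C^1_b$ map; the right-hand side vanishes because $g^h$ is purely vertical. Dividing by $h^\beta$ and using $(y^h)'\to x'$ a.e., $R^h\to\Id$ in $L^2$, together with the splitting above (the bounded-in-$L^\infty$ test matrix $(R^h)^T[\nabla'\psi((y^h)'),0]R^h$ absorbs the $L^1$-small contribution on $B_h^c$ and pairs with the $L^2$-weak limit on $B_h$), one reaches
$$ \int_\Omega(\leb_3 G)'':\nabla'\psi(x')\, dx = 0, $$
which, via $(\leb_3 G)''=\leb_2\sym G''$ and integration in $x_3$ (the linear-in-$x_3$ term vanishes on $(-\tfrac12,\tfrac12)$), reduces to \eqref{EulLag-v}. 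For \eqref{EulLag-u} (and \eqref{EulLag-lin} in the linear regime) I would use a calibrated test function as in \cite{MP06}, of the schematic form $\phi(y)=h^{(\beta-2)/2}(-y_3\nabla'\varphi(y'),\varphi(y'))$ suitably truncated to lie in $C^1_b$, for $\varphi\in C_c^2(\R^2\setminus\overline\Gamma)$; the chosen power of $h$ matches the scaling of $g^h$ so that the right-hand side converges to $\int_S g\varphi\, dx'$, and the left-hand side, after the leading-order cancellation produced by the plate-relaxation property $(\leb_3 G)_{i3}=0$, produces both the bending term $\tfrac{1}{12}\leb_2((\nabla')^2 v):(\nabla')^2\varphi$ (from the $x_3$-linear part of $\sym G$) and, in the von K\'arm\'an regime, the membrane coupling $\leb_2(\sym\nabla'u+\tfrac12\nabla'v\otimes\nabla'v):\nabla'v\otimes\nabla'\varphi$ (from the strong $H^1$-convergence of $v^h$).

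\textbf{Main obstacle.} The central difficulty, absent from \cite{MP06} where the linear growth of $DW$ directly yielded an $L^2$ bound on the stresses, is the bare $L^1$-boundedness of $E^h$ forced by the physical growth \eqref{Ball}. Its resolution by the good/bad-set decomposition requires a delicate calibration: $|B_h^c|$ must shrink faster than $h^{\beta/2}$ (so that $\int_{B_h^c}|E^h|\to 0$), while $h^{\beta/2}G^h$ must remain small on $B_h$ (for the $C^2$ Taylor expansion of $W$ to deliver the $L^2$ bound on $E^h\chi_{B_h}$); both hold for any $\alpha\in(0,\beta/4)$. A secondary, technically non-trivial step is the construction of admissible test functions $\phi\in C_b^1(\R^3;\R^3)$ realising prescribed 2D profiles with the correct $y_3$-dependence while satisfying the clamped condition $\phi(x',hx_3)=0$ on $\Gamma\times(-\tfrac12,\tfrac12)$; this is handled by smooth cutoffs in $y$ and a density argument, following \cite{MP06}.
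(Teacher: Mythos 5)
Your overall strategy (rigidity estimate, decomposition $\nabla_h y^h=R^h(\Id+h^{\beta/2}G^h)$, the stress $E^h$, the good/bad-set dichotomy with $|B_h^c|$ small enough that the Ball bound \eqref{Ball} kills the $L^1$ contribution off $B_h$ while Taylor expansion gives weak $L^2$ convergence on $B_h$, truncated test functions for the first moment) is essentially the paper's. But there is one genuine gap: you assert, as part of the ``standard compactness analysis,'' the structural property $(D^2W(\Id)G)_{i3}=(D^2W(\Id)G)_{3j}=0$, i.e.\ $Ee_3=0$, and you then use it twice — to write $E''=\leb_2\sym G''$ (hence to reduce $Q_3$ to $Q_2$ in \eqref{EulLag-v}) and to produce the ``leading-order cancellation'' in the first-moment equation. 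This is not a compactness fact. The compactness lemma of \cite{FJM06} identifies only the $2\times2$ block of the limiting strain (as in \eqref{G-id1}--\eqref{G-id2lin}); the third column of $G$ is not determined, and for minimizers the reduction to $Q_2$ comes from relaxation in the $\Gamma$-limit, an argument unavailable for general stationary points. In the paper this property is itself a consequence of the Euler--Lagrange equations and occupies a full step (Step 4): one tests with the compressed functions $\phi^h(x)=h\,\phi(x',x_3/h)$, so that only the $\partial_3\phi$ term survives as $h\to0$, uses the mixed convergence of the stresses together with $y_3^h/h\to x_3+v$ to obtain $\int_\Omega Ee_3\cdot\partial_3\phi(x',x_3+v(x'))\,dx=0$, and then removes the composition with $v$ by a further approximation argument, concluding $Ee_3=0$ a.e.\ and hence $E''=\leb_2 G''$. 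Without this step your identifications $\bar E''=\leb_2(\sym\nabla'u+\frac12\nabla'v\otimes\nabla'v)$ and $\hat E''=-\frac1{12}\leb_2(\nabla')^2v$ are unsupported, and the limiting equations cannot be matched with \eqref{EulLag-u}--\eqref{EulLag-v} or \eqref{EulLag-lin}.

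Two smaller remarks. Your combined Kirchhoff-type test function $h^{(\beta-2)/2}(-y_3\nabla'\varphi(y'),\varphi(y'))$ is a legitimate variant of the paper's Step 6: since $R^hE^h(R^h)^T$ is symmetric, the skew block of its gradient is annihilated, so the dangerous $\frac1h E^h_{3i}$ terms never appear and you can avoid the paper's comparison between the two separate test functions $(0,\frac1h\varphi(x'))$ and $(\theta^h(x_3/h)\eta(x'),0)$; this is a real simplification, but only before truncation. After the mandatory truncation in $y_3$ (needed for $C^1_b$ admissibility) the exact cancellation is lost on the set where the truncation is active, and you are back to estimating precisely the error terms the paper controls in \eqref{term22} via $|D_h|\leq C\omega_h^{-2}$ and the calibration \eqref{omegah}; your proposal acknowledges the truncation but does not perform this estimate. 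Finally, in the linear regime $\beta>4$ you should also note that $u=0$ follows from \eqref{finale0} together with $\bar E''=\leb_2(\sym\nabla'u)$ and the boundary condition, a point your sketch leaves implicit.
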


\begin{rem}
If $y^h$ is a sequence of minimizers of $\J^h$ with $g^h(x) = h^{(\beta+2)/2}g(x')e_3$, then 
condition \eqref{growth} is automatically satisfied. 
This can be proved by means of a Poincar\'e-like inequality related to the rigidity theorem by Friesecke, James, and M\"uller (see the proof of \cite[Theorem~2, part~iii]{FJM06}).
\end{rem}

\begin{rem}
In \cite{Mie} Mielke used a centre manifold approach to compare solutions 
in a thin strip to a one-dimensional problem. 
This method works already for finite $h$, but it requires that the nonlinear strain $(\nabla_hy)^T\nabla_hy$ is close to the identity in $L^\infty$. 
Applied forces $g$ are also difficult to include.
We also mention a more recent result by Monneau \cite{Monneau}, based on 
a careful use of the implicit function theorem. He shows that, starting from a 
sufficiently smooth solution of the von K\'arm\'an equations, there exists a nearby 
solution of the three-dimensional problem.
\end{rem}

\section{Proof of Theorem~\ref{mainthm}}

This section is devoted to the proof of the main result of the paper.

\begin{proof}[Proof of Theorem~\ref{mainthm}]
Let $\beta\geq4$. For notational convenience we set 
$$
\alpha:=(\beta+2)/2,
$$
so that $\alpha\geq 3$.
Let $(y^h)$ be a sequence of stationary points of $\J^h$, i.e., suppose that 
\begin{equation}\label{EuleroBh}
\int_{\Omega} DW(\nabla_h y^h)(\nabla_h y^h)^T {\,:\,} \nabla\phi(y^h) \,dx = 
\int_{\Omega} h^\alpha g e_3 {\,\cdot\,} \phi(y^h)\,dx
\end{equation}
for all $\phi\in C^1_b(\R^3;\R^3)$ satisfying $\phi(x',hx_3)=0$ for every  
$x\in\Gamma{\times}(-\frac12,\frac12)$. 
{}Furthermore, assume that condition \eqref{growth} is fulfilled.

\medskip
\textit{Step 1. Decomposition of the deformation gradients in rotation and strain.}
The energy bound \eqref{growth} and the coercivity condition \eqref{eq27} imply that
$$
\int_{\Omega}\dist^2(\nabla_h y^h,SO(3)) \,dx \leq Ch^{2\alpha-2}.
$$
Owing to the rigidity estimate \cite[Theorem~3.1]{FJM02}, 
this bound guarantees the existence of a sequence of smooth rotations $R^h$, 
whose $L^2$ distance from $\nabla_h y^h$ is of order $h^{\alpha-1}$.
A careful analysis of the increment of $R^h$ in neighbouring squares of side $h$ shows that
the gradient of $R^h$ is well controlled in terms of $h$. From this it follows that 
$\nabla_h y^h$ must converge to a constant rotation (namely, the identity, because of the boundary condition) and that the in-plane and out-of-plane displacements satisfy 
the compactness properties \eqref{convU} and \eqref{convV}, respectively.
 
More precisely, arguing as in \cite[Theorem 6 and Lemma 1]{FJM06}, one can 
construct a sequence $(R^h)\subset C^{\infty}(S;\mthree)$ 
such that $R^h(x') \in SO(3)$ for every $x' \in S$ and 
\begin{align}
& \|\nabla_h y^h -R^h \|_{L^2} \leq Ch^{\alpha-1},
\label{boundyh} \\
& \|\nabla' R^h \|_{L^2} 
\leq Ch^{\alpha-2},
\label{id} \\
& \|R^h - \Id \|_{L^2} \leq Ch^{\alpha-2}.\label{id0} 
\end{align}
{}From \eqref{boundyh} and \eqref{id0} it follows that $\nabla_h y^h$ converge to $\Id$
strongly in $L^2(\Omega;\mthree)$; in particular, $\nabla y^h\to \hbox{diag}\{1,1,0\}$ strongly
in $L^2(\Omega;\mthree)$. Therefore, by the boundary condition 
$y^h(x',x_3)=(x',hx_3)$ for every $x\in\Gamma{\times}(-\frac12,\frac12)$ and  
the Poincar\'e inequality, we have that
\begin{equation}\label{yh-conv}
y^h\to (x',0) \quad \mbox{strongly in } H^1(\Omega;\R^3).
\end{equation}

By \cite[Lemma~1]{FJM06} there exist $u\in H^1(S;\R^2)$ and $v\in H^2(S)$ such that 
\eqref{convU} and \eqref{convV} hold true, up to subsequences. From the boundary condition
satisfied by $y^h$ we obtain immediately that $u(x')=0$ and $v(x')=0$ for every 
$x'\in\Gamma$.
Moreover, by \cite[Corollary~1]{FJM06} the first moment $\xi^h$ of the 
in-plane displacement satisfies
$$
\xi^h(x'):=\frac{1}{h^{\alpha-1}}\int_{-\frac12}^{\frac12} x_3\big( (y^h)'(x',x_3) - x'\big) dx_3 
\ \wto \ - \frac{1}{12} \nabla'v
\quad \mbox{weakly in } H^1(S;\mathbb{R}^2).
$$
As $\xi^h=0$ on $\Gamma$ for every $h$, this implies $\nabla'v=0$ on $\Gamma$.
Finally, \cite[Lemma~1]{FJM06} guarantees the following convergence properties for $R^h$:
\begin{equation}\label{RId}
A^h:=\frac{R^h - \Id}{h^{\alpha-2}} \ \wto \ A := -(\nabla'v,0)\otimes e_3 
+ e_3\otimes(\nabla'v,0) \quad \mbox{in } H^1(S;\mthree)
\end{equation}
and 
\begin{equation}\label{RId2}
\sym\frac{R^h - \Id}{h^{2\alpha-4}} \to \frac{A^2}{2}\quad \mbox{in } L^q(S;\mthree), \quad 
\forall q<\infty.
\end{equation}
In particular, by the Poincar\'e-Wirtinger inequality and the equations \eqref{boundyh} and \eqref{RId2}, 
we obtain
\begin{equation}\label{y2-est}
\big\|\frac{y^h_3}{h} -x_3-h^{\alpha-3}v^h\big\|_{L^2}\leq C\big\|\frac{\partial_3y^h_3}{h} -1\big\|_{L^2}
\leq Ch^{\alpha-1}.
\end{equation}

The bound \eqref{boundyh} suggests the following decomposition for the deformation gradients: 
\begin{equation}\label{dec}
\nabla_h y^h = R^h (\Id + h^{\alpha-1} G^h).
\end{equation}
By \eqref{boundyh} the $G^h\colon\Omega\to\mthree$ are bounded in 
$L^2(\Omega; \mthree)$. Thus, up to subsequences, 
$G^h\wto G$ weakly in $L^2(\Omega; \mthree)$ for some $G\in L^2(\Omega;\mthree)$.
By \cite[Lemma~2]{FJM06} the limiting strain $G$ satisfies
\begin{equation}\label{G-id1}
G''(x',x_3) =  G_0(x') - x_3 (\nabla')^2v, 
\end{equation}
where
\begin{eqnarray}
\label{G-id2}
\sym G_0 = \sym\nabla'u + \frac12\nabla'v \otimes \nabla'v &\quad \text{if } \alpha=3,
\\
\label{G-id2lin}
\sym G_0 = \sym\nabla'u &\quad \text{if } \alpha>3.
\end{eqnarray}
We recall that $G''$ denotes the $2{\times}2$ submatrix $G''_{ij}=G_{ij}$, 
$1\leq i,j\leq2$.

\medskip
\textit{Step 2. Estimate for the scaled stress.}
Let $E^h\colon\Omega\to\mthree$ be the scaled stress defined by
\begin{equation}\label{Eh}
E^h:= \frac{1}{h^{\alpha-1}} DW(\Id + h^{\alpha-1} G^h)(\Id + h^{\alpha-1} G^h)^T.
\end{equation} 
Notice that  $E^h$ is symmetric, due to the frame indifference of $W$.
Moreover, the following estimate holds true:
\begin{equation}\label{estEh}
|E^h| \leq C \Big(\frac{W(\Id + h^{\alpha-1} G^h)}{h^{\alpha-1}} + |G^h| \Big).
\end{equation}
Indeed, if $h^{\alpha-1}|G^h|\leq\delta/2$, where $\delta$ is the size of the neighbourhood in \eqref{eq28}, then 
$$
DW(\Id + h^{\alpha-1} G^h) = h^{\alpha-1} D^2W(F^h)G^h,
$$
for some matrix $F^h\in\mthree$ with $|F^h-\Id|\leq\delta/2$. As $D^2W$ is bounded
in this set, we deduce that
$$
|DW(\Id + h^{\alpha-1} G^h)|\leq Ch^{\alpha-1}|G^h|,
$$
which implies
$$
|E^h| \leq C|G^h| + Ch^{\alpha-1} |G^h|^2 \leq C(1+ \delta)|G^h|.
$$
If $h^{\alpha-1}|G^h|>\delta/2$, by \eqref{Ball} we have
$$
|E^h| \leq \frac{1}{h^{\alpha-1}} k \big(W(\Id + h^{\alpha-1} G^h) + 1\big) 
\leq k \frac{W(\Id + h^{\alpha-1} G^h)}{h^{\alpha-1}} + \frac{2k}{\delta}\, |G^h|.
$$
We notice that we are allowed to use the bound \eqref{Ball}, as $W(\nabla_h y^h)$ is finite a.e.\ in~$\Omega$
by \eqref{growth}, hence $\det\nabla_h y^h=\det(\Id + h^{\alpha-1} G^h)>0$ a.e.\ in $\Omega$.
This concludes the proof of (\ref{estEh}).

\medskip
\textit{Step 3. Convergenge properties of the scaled stress.}
By the decomposition (\ref{dec}) and the frame indifference of $W$, we obtain 
\begin{eqnarray*}
DW(\nabla_h y^h)(\nabla_h y^h)^T & = & R^h DW(\Id + h^{\alpha-1}G^h)(\Id + h^{\alpha-1} G^h)^T (R^h)^T 
\\
& = & h^{\alpha-1} R^h E^h (R^h)^T. 
\end{eqnarray*}
Thus, in terms of the stresses $E^h$ the Euler-Lagrange equations (\ref{EuleroBh}) can be written~as
\begin{equation}\label{EuleroBEh}
\int_{\Omega} R^h E^h (R^h)^T{\,:\,} \nabla\phi(y^h) \,dx = 
\int_{\Omega} h g e_3{\,\cdot\,} \phi(y^h)\,dx
\end{equation}
for all $\phi\in C_b^1(\R^3;\R^3)$ satisfying $\phi(x',hx_3)=0$ for every  
$x\in\Gamma{\times}(-\frac12,\frac12)$.

In order to pass to the limit in \eqref{EuleroBEh} we are interested in studying the convergence
properties of the scaled stresses $E^h$.

By \eqref{estEh}, \eqref{growth} and the fact that the $G^h$ are bounded in $L^2(\Omega;\mthree)$, 
we deduce that for every measurable set $\Lambda\subset\Omega$
\begin{eqnarray}
\int_{\Lambda} |E^h|\, dx & \leq &
C\int_{\Lambda} \frac{W(\Id + h^{\alpha-1} G^h)}{h^{\alpha-1}}\,dx + 
C \int_{\Lambda} |G^h|\,dx
\nonumber\\
& \leq & Ch^{\alpha-1} + C|\Lambda|^{1/2}. \label{estL1-v}
\end{eqnarray}
This bound ensures that the scaled stresses $E^h$ are bounded and equi-integrable
in $L^1(\Omega;\mthree)$. Therefore, by the Dunford-Pettis theorem 
\begin{equation}\label{conL1s}
E^h \wto E \quad\mbox{weakly in } L^1(\Omega;\mthree)
\end{equation}
for some $E\in L^1(\Omega;\mthree)$. 
In particular, since $E^h$ is symmetric, also $E$ is symmetric.

One can immediately realize that weak convergence of $E^h$ in $L^1$ is not enough to pass to the limit
in \eqref{EuleroBEh}. This is due to the fact that, for instance, one cannot guarantee uniform convergence of 
the term $\nabla\phi(y^h)$ (recall that for $y^h$ we have the convergence \eqref{yh-conv}).
Therefore, some more refined convergence properties for $E^h$ are needed. In particular,
we shall identify a sequence of sets $B_h$, whose measures converge to the measure of $\Omega$
(and therefore, on $\Omega\setminus B_h$ the sequence $E^h$ converges to $0$ strongly in $L^1$  
by \eqref{estL1-v}), and such that on $B_h$ the sequence $E^h$ is weakly compact in  $L^2$.
Using the $C^1_b$ regularity of test functions, we shall show that this mixed type of convergence
is sufficient to derive the limit equations.
  
Let $B_h:= \{x\in\Omega: h^{\alpha-1-\gamma}|G^h(x)|\leq 1\}$, with $\gamma\in(0,\alpha-2)$, and
let $\chi_h$ be its characteristic function. Notice that 
$$
|\Omega\setminus B_h|\leq \int_{\Omega\setminus B_h} h^{\alpha-1-\gamma}|G^h|\,dx 
\leq Ch^{\alpha-1-\gamma} |\Omega\setminus B_h|^{1/2}\|G^h\|_{L^2},
$$
hence
\begin{equation}\label{mOh}
|\Omega\setminus B_h|\leq Ch^{2(\alpha-1-\gamma)}.
\end{equation}
This implies in particular that $\chi_h$ converges to $1$ in measure and thus, $\chi_h G^h$
converges to $G$ weakly in $L^2(\Omega;\mthree)$.

From \eqref{estL1-v} and \eqref{mOh} it follows that
\begin{equation}\label{estL1}
\int_{\Omega\setminus B_h}|E^h|\, dx \leq Ch^{\alpha-1-\gamma},
\end{equation}
hence
\begin{equation}\label{conL1}
(1-\chi_h)E^h \to 0 \quad \mbox{strongly in }L^1(\Omega;\mthree).
\end{equation}

On the set $B_h$ we have a uniform control of the term $h^{\alpha-1}G^h$, 
so that we can deduce weak convergence of $\chi_hE^h$ in $L^2(\Omega;\mthree)$ from
the weak convergence of $G^h$ simply by Taylor expansion.
More precisely, let $\leb$ be the linear operator defined by $\leb := D^2W(\Id)$. We claim that 
\begin{equation}\label{conL2}
\chi_h E^h \wto \leb G \quad \mbox{weakly in } 
L^2(\Omega; \mthree).
\end{equation}
We note that, as $R^h$ converges boundedly in measure to $\Id$, the claim implies also that
$\chi_hR^hE^h$ converges to $\leb G$ weakly in $L^2(\Omega; \mthree)$. This remark will be
repeatedly used in the next steps of the proof.

By Taylor expansion we have 
$$
DW(\Id+h^{\alpha-1}G^h)= h^{\alpha-1} \leb G^h +\eta(h^{\alpha-1}G^h),
$$
where the remainder $\eta$ satisfies $\eta(F)/|F|\to 0$, as $|F|\to 0$.
This identity leads to the following decomposition of $\chi_h E^h$:
\begin{multline}\label{3terms}
\chi_h E^h = 
\chi_h \frac{1}{h^{\alpha-1}} \big(h^{\alpha-1}\leb G^h + 
\eta(h^{\alpha-1} G^h) \big)(\Id + h^{\alpha-1}G^h)^T 
\\
= \chi_h \leb G^h + \chi_h h^{\alpha-1} \leb G^h(G^h)^T + 
\chi_h \frac{\eta(h^{\alpha-1} G^h)}{h^{\alpha-1}} + \chi_h \eta(h^{\alpha-1} G^h)(G^h)^T.
\end{multline}
To prove the claim \eqref{conL2} we 
analyse carefully each term on the right-hand side of (\ref{3terms}). 
The weak convergence of $\chi_h G^h$ to $G$ in $L^2(\Omega; \mthree)$ and 
the linearity of $\leb$ yield
\begin{equation}\label{first}
\leb(\chi_h G^h)\wto\leb G \quad \mbox{weakly in } 
L^2(\Omega; \mthree). 
\end{equation}
The second term in the right-hand side of \eqref{3terms} can be estimated as follows:
$$
|\chi_h h^{\alpha-1}\leb G^h (G^h)^T| \leq \chi_h Ch^{\alpha-1} |G^h|^2 \leq Ch^\gamma|G^h|.
$$
Therefore, it converges to zero strongly in $L^2(\Omega;\mthree)$ by the
$L^2$ bound of the $G^h$. 
As for the third term in \eqref{3terms}, we have the following bound:
$$
\left|\chi_h \frac{\eta(h^{\alpha-1} G^h)}{h^{\alpha-1}} \right| 
\leq \omega(h^\gamma)\, |G^h|,
$$
where for every $t>0$ we have set
$$
\omega(t):= \sup \left\{\frac{|\eta(A)|}{|A|}: |A| \leq t \right\}.
$$
Since $\omega(t)\to 0$ for $t\to 0^+$, we can conclude as before that
$\chi_h\eta(h^{\alpha-1} G^h)/h^{\alpha-1}$ converges to zero strongly in $L^2(\Omega;\mthree)$. Finally, as 
$$
|\chi_h \eta(h^{\alpha-1} G^h)(G^h)^T| 
 \leq  h^{\alpha-1}\chi_h \omega(h^\gamma)\, |G^h|^2 \leq \omega(h^\gamma)\, h^\gamma |G^h|,
$$
also this last term converges to zero strongly in $L^2(\Omega;\mthree)$.
Combining together \eqref{first} and the previous convergence properties, 
we obtain the claim (\ref{conL2}). 
Notice that by \eqref{conL1s} and \eqref{conL1} this implies $E =\leb G\in L^2(\Omega;\mthree)$.

\medskip
\textit{Step 4. Consequences of the Euler-Lagrange equations.}
We now begin to derive some preliminary information from the Euler-Lagrange equations \eqref{EuleroBEh}.

Let $\phi\in C_b^1(\R^3;\R^3)$ be such that $\phi(x',x_3) = 0$ 
for every $x \in \Gamma{\times}(-\frac12,\frac12)$, and let
us consider a test function of the form $\phi^h(x):=h\phi(x', \frac{x_3}{h})$.
We notice that $\phi^h$ is an admissible test function, as $\phi^h\in C_b^1(\R^3;\R^3)$ and 
$\phi^h(x',hx_3)=h\phi(x',x_3) = 0$ for every $x \in \Gamma {\times}(-\frac12,\frac12)$.

Inserting $\phi^h$ in \eqref{EuleroBEh} leads to 
$$
\begin{array}{l}
\displaystyle 
h\int_{\Omega}\sum_{i=1}^2 R^h E^h (R^h)^T e_i{\,\cdot\,}\partial_i
\phi\big((y^h)', \frac{y^h_3}{h}\big)\, dx 
\\
\displaystyle
{}+ \int_{\Omega} R^h E^h (R^h)^Te_3 {\,\cdot\,} \partial_3 \phi \big((y^h)', 
\frac{y^h_3}{h}\big)\,dx
=\int_{\Omega} h^2 g e_3{\,\cdot\,}\phi\big((y^h)', \frac{y^h_3}{h}\big) \,dx.
\end{array}
$$
As $R^hE^h(R^h)^T$ is bounded in $L^1(\Omega;\mthree)$ and $\nabla'\phi$ is a bounded function, 
the first integral on the left-hand side converges to zero as $h\to0$. 
Since the right-hand side is clearly infinitesimal, we deduce
\begin{equation}\label{limit}
\lim_{h\to0} \int_{\Omega} R^hE^h(R^h)^Te_3{\,\cdot\,}\partial_3\phi\big((y^h)', 
\frac{y^h_3}{h}\big) \,dx = 0.
\end{equation}
On the other hand, owing to \eqref{convV}, \eqref{yh-conv}, \eqref{y2-est},  
and to the continuity and boundedness of $\partial_3\phi$, we have 
\begin{align}
\displaystyle\label{convyv}
\partial_3\phi\big((y^h)', \frac{y^h_3}{h}\big) \ \to \ \partial_3\phi(x',x_3+v(x')) 
& \mbox{ \ strongly in } L^2(\Omega;\R^3), &  \text{ if }\alpha=3, 
\\
\displaystyle\label{convyvlin}
\partial_3\phi\big((y^h)', \frac{y^h_3}{h}\big) \ \to \ \partial_3\phi(x',x_3) 
&\mbox{ \ strongly in } L^2(\Omega;\R^3), & \text{ if }\alpha>3,
\end{align}
(the convergence is actually strong in $L^p(\Omega;\R^3)$ for every $p<\infty$).
Therefore, splitting the integral in \eqref{limit} as 
\begin{eqnarray*}
\lefteqn{\int_{\Omega} R^h E^h (R^h)^Te_3 {\,\cdot\,} \partial_3 \phi 
\big((y^h)', \frac{y^h_3}{h} \big) \,dx} 
\\
& = & \int_{\Omega} \chi_h R^h E^h (R^h)^Te_3 {\,\cdot\,} \partial_3 \phi 
\big((y^h)', \frac{y^h_3}{h} \big) \,dx \\
& & {}+ \int_{\Omega} (1 - \chi_h) R^h E^h (R^h)^Te_3 {\,\cdot\,} \partial_3 \phi 
\big((y^h)', \frac{y^h_3}{h} \big) \,dx
\end{eqnarray*}
and using \eqref{conL1} and \eqref{conL2}, we conclude that
\begin{eqnarray}
\label{E2-part}
\int_{\Omega} E e_3 {\,\cdot\,} \partial_3\phi(x',x_3+v(x')) \, dx = 0 & \qquad \text{if }\alpha=3, 
\\
\label{E2-partlin}
\int_{\Omega} E e_3 {\,\cdot\,} \partial_3\phi \, dx = 0 & \qquad \text{if }\alpha>3,
\end{eqnarray}
for every $\phi\in C_b^1(\R^3;\R^3)$ such that $\phi(x',x_3) = 0$ 
for every $x \in\Gamma {\times}(-\frac12,\frac12)$.

In the case $\alpha=3$, let $w_k\in C^1_b(\R^2)$ be a sequence of functions such that the restriction of $w_k$ to $S$ 
converges to $v$ strongly in $L^2(S)$ and $w_k(x') = 0$ for every $x'\in\Gamma$.
Then, given any $\phi\in C_b^1(\R^3;\R^3)$ satisfying $\phi= 0$ 
on $\Gamma {\times}(-\frac12,\frac12)$ we can choose 
$\phi_k(x):=\phi(x',x_3-w_k(x'))$ as test function in \eqref{E2-part}. Passing to
the limit with respect to $k$, we obtain that equation \eqref{E2-partlin} holds true also for $\alpha=3$. 

From \eqref{E2-partlin} it follows that $Ee_3=0$ a.e.\ in $\Omega$.
This property, together with the fact that $E$ is symmetric, entails 
\begin{equation}\label{E11}
E = \left(
\begin{array}{ccc}
E_{11} &E_{12}  &0\\
E_{12}  &E_{22} &0\\
0 &0 &0
\end{array}
\right)
\end{equation}
for any $\alpha\geq3$.

\medskip
\textit{Step 5. Zeroth moment of the Euler-Lagrange equations.}
Let $\bar E\colon S\to\mthree$ be the zeroth moment of the limit stress $E$, defined as
\begin{equation}\label{zeroth}
\bar{E}(x'):= \int_{-\frac{1}{2}}^{\frac{1}{2}} E(x)\,dx_3
\end{equation}
for every $x' \in S$. In the following we derive the equation satisfied 
by $\bar E$.

We consider as test function in \eqref{EuleroBEh} a map independent of the variable $x_3$.
More precisely, let $\psi\in C_b^1(\R^2;\R^2)$ be such that $\psi(x') = 0$ 
for every $x'\in\Gamma$. Choosing $\phi(x)=(\psi(x'),0)$ in \eqref{EuleroBEh}, we have 
\begin{equation}\label{Eetest2}
\int_{\Omega}[R^h E^h (R^h)^T]''{\,:\,}\nabla'\psi((y^h)')\, dx 
=0,
\end{equation}
where $[R^h E^h (R^h)^T]''$ denotes the $2{\times}2$ submatrix of $R^h E^h (R^h)^T$,
whose entries are given by 
$[R^h E^h (R^h)^T]''_{ij}=R^h E^h (R^h)^Te_i{\,\cdot\,}e_j$,
$1\leq i,j\leq2$.

As in the previous step, it is convenient to split the integral in \eqref{Eetest2} as 
\begin{align}\label{split0}
\int_{\Omega}[R^h E^h (R^h)^T]''{\,:\,}\nabla'\psi((y^h)')\, dx
& = 
\int_{\Omega}\chi_h[R^h E^h (R^h)^T]''{\,:\,}\nabla'\psi((y^h)')\, dx
\nonumber\\
{}+ 
& \int_{\Omega}(1-\chi_h)[R^h E^h (R^h)^T]''{\,:\,}\nabla'\psi((y^h)')\, dx.
\end{align} 
By \eqref{yh-conv} and the continuity and boundedness of $\nabla'\psi$, the sequence 
$\nabla'\psi((y^h)')$ converges to $\nabla'\psi$ strongly in $L^2(\Omega;\mathbb{M}^{2\times2})$. 
Thus, by \eqref{conL2} we obtain 
$$
\lim_{h\to0} \int_{\Omega}\chi_h[R^h E^h (R^h)^T]''{\,:\,}\nabla'\psi((y^h)')\, dx
= \int_{\Omega} E''{\,:\,} \nabla'\psi \,dx,
$$
while, using the boundedness of $\nabla'\psi$ and \eqref{conL1}, we have that the last integral
in \eqref{split0} converges to $0$, as $h\to0$.
Therefore, by \eqref{Eetest2} we conclude that
$$
\int_{\Omega} E''{\,:\,} \nabla'\psi \,dx = 0
$$
for every $\psi\in C_b^1(\R^2;\R^2)$ such that $\psi|_\Gamma=0$. 
In terms of the zeroth moment of the stress defined in \eqref{zeroth}, the previous equation yields
\begin{equation}\label{finale0}
\int_S \bar{E}''{\,:\,} \nabla'\psi \,dx' = 0
\end{equation}
for every $\psi\in C_b^1(\R^2;\R^2)$ such that $\psi|_\Gamma = 0$, and by approximation
for every $\psi\in H^1(S;\R^2)$ with $\psi|_\Gamma = 0$.

\medskip
\textit{Step 6. First moment of the Euler-Lagrange equations.}
We now derive the equation satisfied by the first moment of the stress, that is defined as
\begin{equation}\label{first-m}
\hat{E}(x'):= \int_{-\frac{1}{2}}^{\frac{1}{2}}x_3 E(x)\,dx_3
\end{equation}
for every $x' \in S$. 

Let $\varphi\in C^1_b(\R^2)$ be such that $\varphi|_\Gamma=0$ and let us consider 
$\phi(x)=(0,\frac1h \varphi(x'))$ in \eqref{EuleroBEh}.
Since \eqref{yh-conv} and the continuity and boundedness of $\varphi$ entail
$$
\lim_{h\to0}\int_{\Omega} g \varphi((y^h)')\,dx = 
\int_{\Omega} g\varphi\,dx = \int_S g\varphi\,dx',
$$
we deduce that
\begin{equation}\label{Eetest2h}
\lim_{h\to0} \ \int_{\Omega}\frac{1}{h}\sum_{i=1}^2 [R^h E^h (R^h)^T]_{3i}\, 
\partial_i\varphi((y^h)')\, dx 
= \int_S g\varphi \,dx'.
\end{equation}

On the other hand, we can make partially explicit the limit on the left-hand side by applying
the usual splitting $\Omega=B_h\cup (\Omega\setminus B_h)$ and considering the following decomposition:
\begin{equation}\label{Ehspezzato}
\frac{1}{h}R^h E^h (R^h)^T = h^{\alpha-3}A^hE^h (R^h)^T + 
h^{\alpha-3}E^h (A^h)^T + \frac{1}{h}E^h,
\end{equation}
where $A^h$ is the sequence introduced in \eqref{RId}. 
By the pointwise estimate \eqref{estEh} on $E^h$
and the fact that $|A^h|\leq C/h^{\alpha-2}$, we have
$$
|A^hE^h(R^h)^T +E^h(A^h)^T|\leq C \Big(\frac{W(\Id+h^{\alpha-1}G^h)}{h^{2\alpha-3}}+ |A^h||G^h|\Big),
$$
so that, from the energy bound \eqref{growth} we infer that
\begin{eqnarray*}
\lefteqn{\int_\Omega (1-\chi_h)|A^hE^h(R^h)^T +E^h(A^h)^T |\, dx}
\\
& \leq & C\int_{\Omega}\frac{W(\Id+h^{\alpha-1}G^h)}{h^{2\alpha-3}}\,dx + \|G^h\|_{L^2}\|(1-\chi_h)A^h\|_{L^2}
\\
& \leq & Ch+C\|(1-\chi_h)A^h\|_{L^2}. \vphantom{\int}
\end{eqnarray*}
As $A^h$ converges strongly in $L^2(S;\mthree)$, we conclude that
\begin{equation}\label{L1comp}
(1-\chi_h)\big(A^hE^h(R^h)^T +E^h(A^h)^T\big)\to 0 \quad \text{strongly in } L^1(\Omega;\mthree).
\end{equation}
On the other hand, by \eqref{RId} we have that
$A^h\to A$ strongly in $L^p(S;\mthree)$ for any $p<\infty$, while 
$\chi_hE^h\wto E$ weakly in $L^2(\Omega;\mthree)$ by \eqref{conL2}. Therefore,
\begin{equation}\label{convcomp}
\chi_h\big(A^hE^h (R^h)^T+ E^h (A^h)^T\big)\wto AE+EA^T \quad \text{weakly in } L^q(\Omega;\mthree)
\end{equation}
for any $q>2$. Using \eqref{L1comp}, \eqref{convcomp}, and the fact that
$\partial_i\varphi((y^h)')\to\partial_i\varphi$ strongly in $L^p(\Omega)$ 
for any $p<\infty$, we deduce that
\begin{align}
\lim_{h\to0} \ \int_{\Omega}\sum_{i=1}^2 
[A^hE^h (R^h)^T & +E^h(A^h)^T]_{3i}\partial_i\varphi((y^h)')\, dx
\nonumber
\\
& = \int_{\Omega}\sum_{i=1}^2 [AE+EA^T]_{3i}\partial_i\varphi \,dx.
\label{Eh1}
\end{align}
In particular, from the definition \eqref{RId} of $A$ and the structure property \eqref{E11} of $E$ 
we have that $[AE]_{3i}=\sum_{j=1}^2E_{ji}\partial_j v$ 
and $[EA^T]_{3i}=0$. Therefore, integrating over $x_3$ we obtain
\begin{equation}\label{eq-bar}
\int_{\Omega}\sum_{i=1}^2 [AE+EA^T]_{3i}\partial_i\varphi \,dx
= \int_S \bar{E}''{\,:\,}(\nabla'v{\,\otimes\,}\nabla'\varphi)\, dx'.
\end{equation}
Combining \eqref{Eetest2h}, \eqref{Ehspezzato}, \eqref{Eh1}, and \eqref{eq-bar}, we conclude that, if $\alpha=3$,
\begin{equation}\label{limitEE}
\lim_{h\to0} \ \int_{\Omega}\frac{1}{h}\sum_{i=1}^2 E^h_{3i}\partial_i\varphi((y^h)')\, dx 
= \int_S g\varphi \,dx' - \int_S \bar{E}''{\,:\,}(\nabla'v{\,\otimes\,}\nabla'\varphi)\, dx'
\end{equation}
for every $\varphi\in C_b^1(\R^2)$ such that $\varphi|_\Gamma=0$. 

If $\alpha>3$, by \eqref{Eh1} we deduce that
\begin{equation}\label{Eh21lin}
\lim_{h\to0} \ \int_{\Omega}\sum_{i=1}^2 h^{\alpha-3}[A^hE^h (R^h)^T +E^h(A^h)^T]_{3i}
\partial_i\varphi((y^h)')\, dx =0.
\end{equation}
Combining this equation with \eqref{Eetest2h} and \eqref{Ehspezzato}, 
we conclude that, if $\alpha>3$,
\begin{equation}\label{limitEElin}
\lim_{h\to0} \ \int_{\Omega}\frac{1}{h}\sum_{i=1}^2 E^h_{3i}\partial_i\varphi((y^h)')\, dx 
= \int_S g\varphi \,dx'
\end{equation}
for every $\varphi\in C_b^1(\R^2)$ such that $\varphi|_\Gamma=0$. 

We now want to identify the limit in \eqref{limitEE} and \eqref{limitEElin} in terms of the first moment~$\hat E$. 
This will be done, as in \cite{MP06}, by first considering in the Euler-Lagrange equations \eqref{EuleroBEh}
a suitable test function $\phi$ with a linear behaviour in $x_3$, and then passing to the limit with respect to $h$. 
Using the symmetry of $E$, this provides us with an identity (see \eqref{limitEq} below) relating the first moment $\hat E$ with the limit in \eqref{limitEE} (or \eqref{limitEElin})
and, by comparison with \eqref{limitEE} (or \eqref{limitEElin}, respectively) 
the limiting equation for $\hat E$ (see \eqref{finale} and \eqref{finalelin} below).

An additional difficulty with respect to \cite{MP06} is due to the fact that
the simple choice $\phi(x) = (x_3\eta(x'),0)$ is not allowed in our framework, 
as this test function is not bounded in $\R^3$. 
For this reason we introduce a truncation function $\theta^h$, which coincides with the identity in an interval $(-\omega_h,\omega_h)$, for a suitable $\omega_h\to+\infty$, and we consider a test function of the form \eqref{test3} below. The rate of convergence of $\omega_h$ has to be chosen in such a way to match two requirements.
On one hand, we need to show that the limiting contribution due to the region where $\theta^h$ does not
coincide with the identity is negligible. This can be done by means of the estimate \eqref{estL1-v}, once
we prove that the measure of the set $D_h$ where $|y^h_3/h|\geq \omega_h$ is sufficiently small. 
This is guaranteed if the rate of convergence of $\omega_h$ is fast enough (see proof of \eqref{term22} 
below).
On the other hand, because of this choice, the $L^\infty$-norm of the test functions $\phi^h$ is not bounded, but
blows up as $\omega_h$. Therefore, the convergence rate of $\omega_h$ has to be carefully chosen
to ensure that the integral on $\Omega\setminus B_h$ remains irrelevant, as usual.
This is possible owing to the choice of $B_h$ and the estimate \eqref{estL1} (see proof of \eqref{crucis2}
below).

To be definite, let $\omega_h$ be a sequence of positive numbers such that
\begin{equation}\label{omegah}
h\,\omega_h\to\infty, \qquad h^{\alpha-1-\gamma}\omega_h\to0,
\end{equation}
where $\gamma$ is the exponent introduced in the definition of $B_h$. This is possible
since $\gamma<\alpha-2$ (for instance, one can choose $\omega_h:=h^{-(\alpha-\gamma)/2}$).
Let $\theta^h\in C_b^1(\R)$ be a truncation function satisfying
\begin{align}
\label{barc}
& \theta^h(t) = t \quad \mbox{for } |t| \leq \omega_h,
\\
& |\theta^h(t)|\leq |t| \quad \mbox{for every } t\in\R, \vphantom{\frac{d\theta^h}{dt}}
\label{barc1}
\\
& \|\theta^h\|_{L^\infty}\leq 2\omega_h, \quad \Big\|\frac{d\theta^h}{dt}\Big\|_{L^\infty}\leq 2.
\label{barc2}
\end{align}
Let $\eta\in C^1_b(\R^2;\R^2)$ be such that $\eta(x')=0$ for every $x' \in \Gamma$. 
We define $\phi^h\colon \R^3 \to \R^3$ as 
\begin{equation}\label{test3}
\phi^h(x):= \left(\theta^h\Big(\frac{x_3}{h}\Big)\eta(x'), 0\right).
\end{equation}
Owing to the assumptions on $\theta^h$ and $\eta$, the $\phi^h$ are admissible test 
functions in \eqref{EuleroBEh}; then inserting $\phi^h$ in \eqref{EuleroBEh} leads to
\begin{multline}\label{Eetest3}
\int_{\Omega}\theta^h\Big(\frac{y_3^h}{h}\Big)[R^h E^h (R^h)^T]''{\,:\,}\nabla'\eta((y^h)')\,dx 
\\
{}+\int_{\Omega}\frac{1}{h}\sum_{i=1}^2 [R^h E^h (R^h)^T]_{i3}\, \eta_i((y^h)')
\Big(\frac{d\theta^h}{dt}\!\Big(\frac{y_3^h}{h}\Big)\Big)\,dx=0.
\end{multline}
We now compute the limit of each term in \eqref{Eetest3} separately, starting with the first.
We consider the usual splitting $\Omega=B_h\cup (\Omega\setminus B_h)$ and we carefully analyse the 
contributions of the integral in the two subdomains. 

If $\alpha=3$,  we have that
\begin{equation}\label{firstOhc}
\lim_{h\to0} \int_{\Omega}\chi_h\theta^h\Big(\frac{y_3^h}{h}\Big)[R^h E^h (R^h)^T]''
{\,:\,}\nabla'\eta((y^h)')\,dx  
= \int_S (\hat{E}''+v\bar{E}''){\,:\,}\nabla'\eta \,dx'.
\end{equation}
Indeed, by \eqref{y2-est} and \eqref{convV} the sequence $y_3^h/h$ converges to $x_3+v$ a.e.\ in $\Omega$ and
is dominated by an $L^2$ function. From \eqref{barc} and \eqref{barc1} it follows
that the sequence $\theta^h(y_3^h/h)$ converges to $x_3+v$ a.e.\ in $\Omega$ and
is dominated by an $L^2$ function. Owing to the convergence \eqref{yh-conv} of $y^h$ and to the continuity and 
boundedness of $\nabla'\eta$, we conclude that 
$$
\theta^h\Big(\frac{y_3^h}{h}\Big)\nabla'\eta((y^h)') \to (x_3 + v) \nabla'\eta(x') 
\quad \mbox{strongly in } L^2(\Omega; \R^2). 
$$
Therefore, by \eqref{conL2} we deduce 
$$
\lim_{h\to0} \int_{\Omega}\chi_h\theta^h\Big(\frac{y_3^h}{h}\Big)[R^hE^h (R^h)^T]''{\,:\,}\nabla'\eta((y^h)')\,dx  = \int_{\Omega} (x_3 + v)E''{\,:\,}\nabla'\eta(x') \,dx.
$$
Integration with respect to $x_3$ yields \eqref{firstOhc}.

As for the integral on $\Omega\setminus B_h$, by the estimate \eqref{barc2} on $\theta^h$ and
\eqref{estL1} it can be bounded by
\begin{align}
\int_{\Omega}(1-\chi_h)
\Big|\theta^h &
\Big(\frac{y_3^h}{h}\Big)[R^hE^h (R^h)^T]''{\,:\,}\nabla'\eta((y^h)')\Big|\,dx
\nonumber
\\
& \leq \hspace{2mm} 2\omega_h\|\nabla'\eta\|_{L^\infty}\int_{\Omega\setminus B_h} |E^h| \hspace{2mm}
\leq \hspace{2mm} C h^{\alpha-1-\gamma}\omega_h;
\label{crucis2}
\end{align}
therefore, it is infinitesimal as $h\to0$ by the second property in \eqref{omegah}.
We conclude that, if $\alpha=3$,
\begin{equation}\label{final1}
\lim_{h\to0} \int_{\Omega}\theta^h\Big(\frac{y_3^h}{h}\Big)[R^h E^h (R^h)^T]'' {\,:\,} \nabla'\eta((y^h)')\,dx  
= \int_S (\hat{E}''+v\bar{E}''){\,:\,} \nabla'\eta \,dx'.
\end{equation}

Analogously, for $\alpha>3$, since $y_3^h/h$ converges to $x_3$ strongly in $L^2(\Omega)$, we deduce that
\begin{equation}\label{final1lin}
\lim_{h\to0} \int_{\Omega}\theta^h\Big(\frac{y_3^h}{h}\Big)[R^h E^h (R^h)^T]'' {\,:\,} \nabla'\eta((y^h)')\,dx  
= \int_S \hat{E}''{\,:\,} \nabla'\eta \,dx'.
\end{equation}

In order to analyse the second integral in \eqref{Eetest3}, it is convenient to 
split it as follows:
\begin{align}
\int_{\Omega}\frac{1}{h} \sum_{i=1}^2 & [R^h E^h (R^h)^T]_{i3}\, \eta_i((y^h)')
\Big(\frac{d\theta^h}{dt}\!\Big(\frac{y_3^h}{h}\Big)\Big)\,dx 
\nonumber
\\
= \int_{\Omega}&\,\frac{1}{h} \sum_{i=1}^2 [R^h E^h (R^h)^T]_{i3}\, \eta_i((y^h)') \,dx
\nonumber
\\
& {}+ \int_{\Omega}\frac{1}{h} \sum_{i=1}^2 [R^h E^h (R^h)^T]_{i3}\, \eta_i((y^h)')
\Big(\frac{d\theta^h}{dt}\!\Big(\frac{y_3^h}{h}\Big)-1\Big)\,dx.
\label{split23}
\end{align} 
We claim that the second term on the right-hand side is infinitesimal, as $h\to0$, that is,
\begin{equation}\label{term22}
\lim_{h\to 0} \int_{\Omega}\frac{1}{h} \sum_{i=1}^2 [R^h E^h (R^h)^T]_{i3}\, \eta_i((y^h)')
\Big(\frac{d\theta^h}{dt}\!\Big(\frac{y_3^h}{h}\Big)-1\Big)\,dx= 0.
\end{equation}

If $\alpha=3$, combining \eqref{Eetest3}, \eqref{final1}, and \eqref{split23}, the claim implies that
\begin{equation}\label{new-eq}
\lim_{h\to0} \int_{\Omega}\frac{1}{h} \sum_{i=1}^2 [R^h E^h (R^h)^T]_{i3}\, \eta_i((y^h)') \,dx
= - \int_S (\hat{E}''+v\bar{E}''){\,:\,} \nabla'\eta \,dx'.
\end{equation}
On the other hand, we can identify the limit on the left-hand side of the equation above 
by applying the decomposition \eqref{Ehspezzato} and using the convergence properties 
\eqref{L1comp} and \eqref{convcomp}; in this way, we deduce that, if $\alpha=3$,
\begin{eqnarray}
\lefteqn{\lim_{h\to0} \int_{\Omega}\frac{1}{h} \sum_{i=1}^2 [R^h E^h (R^h)^T]_{i3}\, \eta_i((y^h)') \,dx}
\nonumber
\\
\label{term21}
& = & \int_S \bar{E}''{\,:\,}(\eta{\,\otimes\,}\nabla'v)\, dx'
\ + \ \lim_{h\to0} \int_{\Omega} \frac{1}{h}\sum_{i=1}^2 E^h_{i3}\, \eta_i((y^h)')\,dx.
\end{eqnarray}
Comparing \eqref{new-eq} and \eqref{term21}, and applying equation
\eqref{finale0} with $\psi=v\eta$, we conclude that, if $\alpha=3$,
\begin{equation}\label{limitEq}
\lim_{h\to0} \int_{\Omega} \frac{1}{h}\sum_{i=1}^2\, E^h_{i3}\, \eta_i((y^h)')\,dx = 
- \int_S \hat{E}''{\,:\,} \nabla'\eta \,dx'.
\end{equation}

If $\alpha>3$, combining \eqref{Eetest3}, \eqref{final1lin}, \eqref{split23}, and the claim
\eqref{term22}, we obtain
\begin{equation}\label{new-eqlin}
\lim_{h\to0} \int_{\Omega}\frac{1}{h} \sum_{i=1}^2 [R^h E^h (R^h)^T]_{i3}\, \eta_i((y^h)') \,dx
= - \int_S \hat{E}''{\,:\,} \nabla'\eta \,dx';
\end{equation}
moreover, by the decomposition \eqref{Ehspezzato} and the convergence properties \eqref{L1comp} and \eqref{convcomp}
we have
\begin{equation}\label{term21lin}
\lim_{h\to0} \int_{\Omega}\frac{1}{h} \sum_{i=1}^2 [R^h E^h (R^h)^T]_{i3}\, \eta_i((y^h)') \,dx= \lim_{h\to0} \int_{\Omega} \frac{1}{h}\sum_{i=1}^2 E^h_{i3}\, \eta_i((y^h)')\,dx.
\end{equation}
Comparing \eqref{new-eqlin} and \eqref{term21lin}, we deduce that equation \eqref{limitEq}
is satisfied also for $\alpha>3$.

It remains to prove \eqref{term22}. To this aim we introduce the set
$D_h:=\{x\in\Omega: |y^h_3(x)|/h \geq \omega_h \}$. Since
the sequence $y_3^h/h$ is bounded in $L^2(\Omega)$
by \eqref{y2-est} and \eqref{convV}, we have
$$
|D_h| \leq \omega_h^{-1} \int_{D_h} \frac{|y_3^h|}{h}\,dx 
\leq c\, \omega_h^{-1} |D_h|^{1/2},
$$
which implies
\begin{equation}\label{setest}
|D_h| \leq C\omega_h^{-2}.
\end{equation}

Since the derivative of $\theta^h$ is equal to $1$ on $(-\omega_h, \omega_h)$ by \eqref{barc}, 
the integral in \eqref{term22} reduces to
\begin{multline}\nonumber
\int_{\Omega}\frac{1}{h} \sum_{i=1}^2 [R^h E^h (R^h)^T]_{i3}\, \eta_i((y^h)')
\Big(\frac{d\theta^h}{dt}\!\Big(\frac{y_3^h}{h}\Big)-1\Big)\,dx
\\
= \int_{D_h}\frac{1}{h} \sum_{i=1}^2 [R^h E^h (R^h)^T]_{i3}\, \eta_i((y^h)')
\Big(\frac{d\theta^h}{dt}\!\Big(\frac{y_3^h}{h}\Big)-1\Big)\,dx.
\end{multline}
By \eqref{estL1-v}, \eqref{barc2}, and \eqref{setest}, we have
\begin{align*}
\Big| \int_{D_h}\frac{1}{h}  \sum_{i=1}^2 & [R^h E^h (R^h)^T]_{i3}\, \eta_i((y^h)')
\Big(\frac{d\theta^h}{dt}\!\Big(\frac{y_3^h}{h}\Big)-1\Big)\,dx
 \Big| 
\\
& \leq  \frac{C}{h}\Big(1 + \Big\|\frac{d\theta^h}{dt}\Big\|_{L^\infty}\Big)\|\eta \|_{L^\infty}
\int_{D_h} |E^h| \,dx 
\\
& \leq  Ch^{\alpha-2}+ \frac{C}{h}|D_h|^{1/2} \leq Ch^{\alpha-2} +\frac{C}{h\,\omega_h}.
\end{align*}
By \eqref{omegah} this proves the claim \eqref{term22}.

\medskip
\textit{Step 7. Limit equations.} Let $\varphi \in C^2_b(\R^2)$ be such that $\varphi(x') = 0$, 
$\nabla'\varphi(x') = 0$ for every $x'\in \Gamma$. Since $E$ is symmetric,
we can compare equation \eqref{limitEq} (where we specify $\eta = \nabla'\varphi$) with
\eqref{limitEE}, if $\alpha=3$, or \eqref{limitEElin}, if $\alpha>3$. In this way we deduce
that, if $\alpha=3$
\begin{equation}\label{finale}
\int_S \bar{E}''{\,:\,}(\nabla'v{\,\otimes\,}\nabla'\varphi)\, dx'
- \int_S \hat{E}''{\,:\,} (\nabla')^2\varphi \,dx' = \int_S g \varphi\,dx',
\end{equation} 
while, if $\alpha>3$,
\begin{equation}\label{finalelin}
- \int_S \hat{E}''{\,:\,} (\nabla')^2\varphi \,dx' = \int_S g \varphi\,dx'.
\end{equation} 
By approximation the two equations hold for every $\varphi\in H^2(S)$ with $\varphi|_\Gamma=0$
and $\nabla'\varphi|_\Gamma=0$.

In order to express the limiting equations \eqref{finale0}, \eqref{finale}, and \eqref{finalelin} in terms of the limit displacements, 
an explicit characterization of $\bar E''$ and $\hat{E}''$ is needed. 
Since $E = \leb G$ and $E$ is of the form \eqref{E11}, 
we have $E'' = \leb_2 G''$ (see \cite[Proposition~3.2]{MP06}). 
Therefore, by \eqref{G-id1} and \eqref{G-id2} we obtain, for $\alpha=3$,
$$
\bar E''= \leb_2 \big(\sym\nabla'u + \frac12 \nabla'v \otimes \nabla'v\big),
\quad \hat E''=-\frac{1}{12}\leb_2 (\nabla')^2 v.
$$
These identities, together with \eqref{finale0} and \eqref{finale}, provide us with the Euler-Lagrange equations
\eqref{EulLag-u}--\eqref{EulLag-v}. 

By \eqref{G-id1} and \eqref{G-id2lin} we obtain, for $\alpha>3$,
$$
\bar E''= \leb_2(\sym\nabla'u),
\quad \hat E''=-\frac{1}{12}\leb_2 (\nabla')^2 v.
$$
The first identity, together with \eqref{finale0} and the boundary condition $u=0$ on $\Gamma$, implies
that $u=0$, while the second identity, together with \eqref{final1lin}, provide us with the Euler-Lagrange
equation \eqref{EulLag-lin}.
This concludes the proof.
\end{proof}

\bigskip
\bigskip
\medskip

\noindent
\textbf{Acknowledgements.}
This work is part of the project ``Problemi di riduzione di dimensione per strutture elastiche sottili'' 2008, supported by GNAMPA. M.G.M. was also partially supported by MiUR through the project ``Variational problems with multiple scales'' 2006. L.S. was partially supported by  Marie Curie Research Training Network MRTN-CT-2004-505226 (MULTIMAT).

\bigskip
\bigskip
\medskip

\end{document}